\theoremstyle{plain}\newtheorem{Theorem}{Theorem}
\theoremstyle{plain}
\theoremstyle{plain}\newtheorem{Corollary}[Theorem]{Corollary}
\theoremstyle{plain}\newtheorem{Lemma}[Theorem]{Lemma}
\theoremstyle{plain}
\theoremstyle{definition}
\theoremstyle{definition}
\theoremstyle{definition}
\theoremstyle{definition}
\theoremstyle{definition}
\theoremstyle{definition}\newtheorem{Remark}[Theorem]{Remark}
 \def\OG{{\mathcal{O}G}}
\def\CO{{\mathcal{O}}}
            \def\tenk{\otimes_k}
\def\dim{\mathrm{dim}}
      \def\tenO{\otimes_{\mathcal{O}}}
\def\rk{\mathrm{rk}}        
\def\Res{\mathrm{Res}}
\title[Vertices of tensor products]{A note on vertices of 
indecomposable tensor products} 
\author[Linckelmann]{Markus Linckelmann}
\address{Department of Mathematics,
City, University of  London EC1V 0HB, United Kingdom}
\email{markus.linckelmann.1@city.ac.uk}
\keywords{Group algebra, module, vertex, source, tensor product}
\subjclass[2010]{20C20}
\date{\today}
\begin{document}

\begin{abstract}
G. Navarro raised the question under what circumstancs two vertices of 
two indecomposable modules over a finite group algebra generate a Sylow 
$p$-subgroup. The present note provides a sufficient criterion for when 
this is the case. This generalises a result by Navarro for simple 
modules over finite $p$-solvable groups, which is the main motivation 
for this note. 
\end{abstract}

\maketitle

Let $p$ be a prime and $\CO$ a complete local principal ideal domain 
with residue field $k$ of characteristic $p$. We allow the case 
$\CO=k$, unless stated otherwise. We assume that $k$ is large enough for 
the modules and their sources that appear in this note to be absolutely 
indecomposable; the point of this hypothesis is that it ensures that 
indecomposable modules over finite group algebras have multiplicity 
modules (see \cite[\S 5.7]{LiBookI} for background material). 
If the field of fractions $K$ of $\CO$ has characteristic zero, then
we also assume that $K$ is large enough so that the irreducible
characters over $K$ that arise below are absolutely irreducible. 
In some of the results in the literature cited below, $k$ is 
algebraically closed for convenience, but it is easy to see that 
`large enough' in the sense above will do for the quoted results. 
Modules are finitely generated left modules. 

\begin{Theorem} \label{tensor-indec}
Let $G$ be a finite group and let $U$, $V$ be indecomposable
$\CO$-free $\OG$-modules having sources of $\CO$-ranks prime to $p$.
Suppose that $W=U\tenO V$ is indecomposable and that $W$
has a simple multiplicity module. Then the sources of $W$ have
$\CO$-rank prime to $p$, and there exist vertices $Q$, $R$ of $U$, $V$, 
respectively such that $Q\cap R$ is a vertex of $W$ and such that
$QR$ is a Sylow $p$-subgroup of $G$. 
\end{Theorem}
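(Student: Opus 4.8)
The plan is to play an upper bound on a vertex of $W$, coming from relative projectivity, against a lower bound coming from the Brauer construction, and to use the multiplicity-module dimension formula to convert the hypotheses into $p$-arithmetic. Write $|n|_p$ for the $p$-part of a positive integer $n$, and fix vertices $Q$ of $U$ and $R$ of $V$. By the tensor identity $(\Ind_Q^G X)\tenO V\cong\Ind_Q^G(X\tenO\Res_Q V)$, the module $W$ is relatively $Q$-projective, and symmetrically relatively $R$-projective; hence a vertex $P$ of $W$ is contained, up to conjugacy, both in $Q$ and in $R$. After replacing $Q$ and $R$ by suitable conjugates I may assume $P\le Q\cap R$, and the content of the theorem is that this inclusion is an equality, that $|QR|=|G|_p$, and that $QR$ is in fact a subgroup.

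For the arithmetic, recall that for an indecomposable $\CO$-free module $M$ with vertex $X$ one has $|G:X|_p\mid\dim_\CO M$; applied to $U$ and $V$ this gives $|G:Q|_p\,|G:R|_p\mid\dim_\CO W$. Since $W$ has a simple multiplicity module, that module is a projective simple module over the relevant twisted group algebra attached to $N_G(P)/P$, hence lies in a block of defect zero and has the largest possible $p$-part in its dimension; the dimension formula then pins down $|\dim_\CO W|_p=|G:P|_p\cdot|\dim_\CO S_W|_p$, where $S_W$ is a source of $W$. Since $\dim_\CO W=\dim_\CO U\cdot\dim_\CO V$, these two facts reduce the theorem to three assertions, each a form of the reverse inequality: a source of $W$ has rank prime to $p$; $Q\cap R$ is contained in a vertex of $W$ (whence $P=Q\cap R$, since we already have $P\le Q\cap R$); and $QR$ is a subgroup. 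Granting these, the displayed equality with $|\dim_\CO S_W|_p=1$ gives $|G:P|_p=|G|_p/|Q\cap R|\ge|G:Q|_p\,|G:R|_p$, so that $|QR|\ge|G|_p$, while $QR$ being a subgroup forces $|QR|\le|G|_p$; hence $QR$ is a Sylow $p$-subgroup and $P=Q\cap R$ is a vertex of $W$ with $p$-prime source, as required.

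The heart of the matter, and the main obstacle, is the lower bound that $Q\cap R$ is contained in a vertex of $W$, for which I would use the Brauer construction. For an indecomposable module the Brauer construction vanishes outside subgroups of a vertex, so it suffices to show $(U\tenO V)(Q\cap R)\neq 0$. The Brauer construction is not multiplicative on tensor products in general, and this is exactly where the hypothesis that $S_U$ and $S_V$ have rank prime to $p$ enters: it should guarantee that the sources survive in $U(Q\cap R)$ and $V(Q\cap R)$ and that their product contributes a nonzero summand of rank prime to $p$ to $(U\tenO V)(Q\cap R)$. Because $W$ is indecomposable this pins a vertex of $W$ to $Q\cap R$, and the surviving $p$-prime summand is then forced to be a source $S_W$, giving the first two assertions. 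The remaining point, that $QR$ is a genuine subgroup, is the most delicate; here I expect to exploit the symmetry $U\tenO V\cong V\tenO U$ together with the simplicity of the multiplicity module of $W$, which controls the relative position (the fusion) of $Q$ and $R$ inside $N_G(Q\cap R)$ and should force $Q$ and $R$ to multiply to a full Sylow $p$-subgroup. Verifying the non-vanishing of the Brauer construction for modules whose sources merely have rank prime to $p$, and extracting from the simple multiplicity module the fusion statement that makes $QR$ a subgroup, are the two steps I expect to require the most work.
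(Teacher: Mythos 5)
Your arithmetic skeleton is the same as the paper's: Green's divisibility bound, Knörr's equality $\rk_\CO(W)_p=|G:S|_p$ extracted from the simple multiplicity module, and the counting identity $|QR|=|Q|\,|R|/|Q\cap R|\geq |G|_p$. But two of your three ``reduction targets'' are left with no proof, and in both cases the machinery you propose is the wrong one. First, the containment of $Q\cap R$ in a vertex of $W$ should not go through the Brauer construction. As you yourself note, $(U\tenO V)(P)$ is not $U(P)\otimes V(P)$ outside the class of $p$-permutation modules, and worse, for general (non-$p$-permutation) modules nonvanishing of the Brauer quotient at subgroups of a vertex is simply not available; the $p'$-rank hypothesis on the sources does not make anything ``survive'' in the Brauer quotient. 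The paper's argument is elementary restriction and rank-counting: a source $X$ of $U$ is a direct summand of $\Res^G_Q(U)$, so $\Res^G_{Q\cap {^xR}}(U)$ has an indecomposable summand of $\CO$-rank prime to $p$, and likewise for $V$; their tensor product then yields an indecomposable summand $Y$ of $\Res^G_{Q\cap{^xR}}(W)$ of rank prime to $p$, and Green's bound applied inside the $p$-group $Q\cap {^xR}$ forces $Y$ to have vertex $Q\cap{^xR}$, whence $Q\cap{^xR}$ lies in a vertex of $W$ for \emph{every} $x$, and is a vertex (with $Y$ a $p'$-rank source) when $|Q\cap{^xR}|$ is maximal. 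Your instinct that ``the sources survive and contribute a $p'$-rank summand'' is exactly right, but applied to $\Res^G_{Q\cap R}$ rather than to the Brauer quotient it becomes a two-line argument instead of an open problem.

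Second, the step you flag as most delicate --- that $QR$ is a subgroup, to be extracted from the simplicity of $M_W$ via fusion in $N_G(Q\cap R)$ --- is both unproven and the wrong target. The simplicity of the multiplicity module is already fully spent in Knörr's equality; it carries no further fusion information, and for your normalized pair $(Q,R)$ the product set $QR$ need not be a subgroup at all --- the theorem only asserts the \emph{existence} of a suitable pair of vertices. The paper sidesteps the issue: choose a Sylow $p$-subgroup $P_0$ containing $Q$ and $x\in G$ with ${^xR}\subseteq P_0$. Since $Q\cap{^xR}$ is contained in a vertex of $W$ (the containment statement for arbitrary $x$, which is why you must prove it in that generality), one has $|Q\cap{^xR}|\leq |Q\cap R|$, hence $|Q\,({^xR})|\geq |QR|\geq |G|_p=|P_0|$; but $Q\,({^xR})\subseteq P_0$ gives the reverse inequality, so all inequalities collapse, $Q\,({^xR})=P_0$ is a Sylow $p$-subgroup, and $|Q\cap{^xR}|=|Q\cap R|$ makes $Q\cap{^xR}$ a vertex of $W$. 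Being a subgroup thus comes for free from the counting after conjugating $R$; no symmetry of the tensor product and no analysis of $N_G(Q\cap R)$ is needed. To repair your writeup: replace the Brauer-construction step by the restriction argument, state the containment lemma for all conjugates ${^yR}$, make explicit the maximality of $|Q\cap R|$ among the $|Q\cap{^yR}|$ (your normalization $P\leq Q\cap R$ achieves this once the containment lemma is in place), and finish with the conjugation-into-a-common-Sylow argument in place of your item (c).
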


The proof of Theorem \ref{tensor-indec} yields some further technical
information regarding which intersections of vertices of $U$ and $V$
do yield vertices of $W$ (cf. Remark \ref{rem1}), and some information 
regarding the $p$-parts of the ranks of $U$, $V$, $W$ and the dimensions 
of their multiplicity modules (cf. Remark \ref{rem2}). The proof shows 
also that the hypothesis on the simplicity of the multiplicity module of 
$W$ can be replaced by a slightly weaker condition on the dimension of 
the multiplicity module of $W$ (cf. Remark \ref{rem3}). See K\"ulshammer 
\cite[Proposition 2.1]{Kuls93} for a sufficient criterion for when the 
tensor product of two modules is indecomposable. 
We note some immediate consequences of Theorem \ref{tensor-indec}. 
By a result of Kn\"orr (in the proof of \cite[Proposition 3.1]{Knorr}, 
also described in \cite[Corollary 5.7.9]{LiBookI}), simple 
$kG$-modules and $\OG$-lattices with irreducible characters have simple 
multiplicity modules. Thus Theorem \ref{tensor-indec} implies the 
following two results.

\begin{Corollary} \label{tensor-simple}
Let $G$ be a finite group and let $U$, $V$ be simple $kG$-modules
with sources of dimensions prime to $p$ such that $W=$ $U\tenk V$
is simple. Then the sources of $W$ have dimension prime to $p$, and
there exist vertices $Q$, $R$ of $U$, $V$, respectively, such that 
$Q\cap R$ is a vertex of $W$ and such that $QR$ is a Sylow $p$-subgroup 
of $G$.
\end{Corollary}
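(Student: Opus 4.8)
The plan is to derive Corollary \ref{tensor-simple} as the special case $\CO = k$ of Theorem \ref{tensor-indec}, so the entire task reduces to checking that the hypotheses of the theorem hold for simple $kG$-modules. First I would observe that with $\CO = k$ the tensor product $\tenO$ becomes $\tenk$, and every $k$-module is automatically $k$-free, so the $\CO$-freeness hypothesis is vacuous. A simple module is indecomposable, so $U$, $V$, and the simple module $W$ are all indecomposable $kG$-modules as required. Under the identification $\CO = k$ the $\CO$-rank of a source coincides with its $k$-dimension, so the assumption that $U$ and $V$ have sources of dimension prime to $p$ matches exactly the hypothesis on source $\CO$-ranks in the theorem.

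The only substantive point is to confirm that $W$ has a simple multiplicity module. For this I would invoke the result of Kn\"orr recalled immediately before the statement, namely that every simple $kG$-module has a simple multiplicity module (from the proof of \cite[Proposition 3.1]{Knorr}, see also \cite[Corollary 5.7.9]{LiBookI}). Since $W = U \tenk V$ is assumed simple, this applies directly to $W$ and supplies the missing hypothesis. With all conditions of Theorem \ref{tensor-indec} thereby verified, its conclusion yields at once that the sources of $W$ have dimension prime to $p$ and produces vertices $Q$, $R$ of $U$, $V$ with $Q \cap R$ a vertex of $W$ and $QR$ a Sylow $p$-subgroup of $G$.

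I expect no genuine obstacle here beyond correctly matching the definitions across the change of coefficients from $\CO$ to $k$; the sole piece of real content is Kn\"orr's theorem on simple multiplicity modules, which is precisely what licenses the reduction to the indecomposable-tensor-product framework of Theorem \ref{tensor-indec}.
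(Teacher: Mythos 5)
Your proposal is correct and matches the paper's own derivation exactly: the paper deduces Corollary \ref{tensor-simple} from Theorem \ref{tensor-indec} by invoking precisely the result of Kn\"orr (from the proof of \cite[Proposition 3.1]{Knorr}, cf.\ \cite[Corollary 5.7.9]{LiBookI}) that simple $kG$-modules have simple multiplicity modules, with the case $\CO=k$ making $\CO$-freeness and $\CO$-ranks reduce to the trivial conditions and $k$-dimensions you describe. Nothing is missing; the one substantive ingredient you identified is the same one the paper uses.
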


\begin{Corollary} \label{tensor-irred}
Suppose that the field of fractions $K$ of $\CO$ has characteristic
zero. Let $U$, $V$ be $\CO$-free $\OG$-modules with irreducible
characters and sources of $\CO$-ranks prime to $p$. Suppose that
the character of $W=U\tenO V$ is irreducible. 
Then the sources of $W$ have $\CO$-rank prime to $p$, and
there exist vertices $Q$, $R$ of $U$, $V$, respectively, such that 
$Q\cap R$ is a vertex of $W$ and such that $QR$ is a Sylow $p$-subgroup 
of $G$.
\end{Corollary}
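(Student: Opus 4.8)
The plan is to verify that the hypotheses of Theorem \ref{tensor-indec} hold for $U$, $V$, $W$ and then to invoke that theorem directly. The only work is to translate the irreducibility hypotheses on characters into the two conditions required by the theorem, namely indecomposability and simplicity of the relevant multiplicity module. Since $K$ has characteristic zero and we are assuming $K$ large enough, characters behave as expected and this translation is clean.

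First I would record that $U$ and $V$ are indecomposable. If an $\CO$-free $\OG$-module decomposes as a direct sum of two nonzero $\CO$-free summands, then its character is the sum of the two corresponding nonzero characters of these summands, hence reducible. Since the characters of $U$ and $V$ are irreducible by hypothesis, $U$ and $V$ must be indecomposable. Together with the hypothesis that the sources of $U$ and $V$ have $\CO$-ranks prime to $p$, this puts $U$ and $V$ in the situation required by Theorem \ref{tensor-indec}.

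Next I would examine $W=U\tenO V$. As $U$ and $V$ are $\CO$-free, so is $W$, and the character of $W$ is the product of the characters of $U$ and $V$; this is the character we are assuming to be irreducible. By the same argument as above, $W$ is indecomposable. Moreover $W$ is an $\OG$-lattice with irreducible character, so by the result of Kn\"orr recalled before the statement (in the proof of \cite[Proposition 3.1]{Knorr}, also \cite[Corollary 5.7.9]{LiBookI}), $W$ has a simple multiplicity module.

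At this point all hypotheses of Theorem \ref{tensor-indec} are in place, and applying it yields directly that the sources of $W$ have $\CO$-rank prime to $p$ and that there exist vertices $Q$, $R$ of $U$, $V$ with $Q\cap R$ a vertex of $W$ and $QR$ a Sylow $p$-subgroup of $G$. I do not expect a genuine obstacle here: the indecomposability statements are elementary, and the one substantive input — that $W$ has a simple multiplicity module — is exactly Kn\"orr's theorem applied to the lattice $W$ with irreducible character.
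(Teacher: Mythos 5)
Your proposal is correct and matches the paper's own argument: the paper derives this corollary directly from Theorem \ref{tensor-indec} via the same result of Kn\"orr (from the proof of \cite[Proposition 3.1]{Knorr}, see also \cite[Corollary 5.7.9]{LiBookI}) that $\OG$-lattices with irreducible characters have simple multiplicity modules. Your explicit verification that irreducibility of the characters forces $U$, $V$ and $W$ to be indecomposable is a detail the paper leaves implicit, but it is exactly the intended reading.
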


By a result of Puig in unpublished notes from 1988 (see  e. g. 
\cite[Theorem (30.5)]{Thev} or \cite[Theorem 10.6.8]{LiBookI}), if 
$G$ is a finite $p$-solvable group, then simple $kG$-modules have 
endopermutation sources; in particular, their sources have dimensions 
prime to $p$. Thus Corollary \ref{tensor-simple} implies  the following 
result due to Navarro (and this is the main motivation for this note).

\begin{Corollary}[{Navarro \cite[Theorem A]{Nav2019}}] 
\label{tensor-psolvable}
Let $G$ be a finite $p$-solvable group and let $U$, $V$ be
simple $kG$-modules such that $W=$ $U\tenk V$ is simple.
Then there exist vertices $Q$, $R$ of $U$, $V$, respectively,
such that $Q\cap R$ is a vertex of $W$ and such that $QR$ is
a Sylow $p$-subgroup of $G$.
\end{Corollary}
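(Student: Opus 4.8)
The plan is to deduce this directly from Corollary \ref{tensor-simple}: all the substantive work has already been carried out in Theorem \ref{tensor-indec} and its specialisation to simple $kG$-modules, and the only extra input required is that the hypothesis on the sources is automatic once $G$ is $p$-solvable. Thus the proof reduces to checking that the sources of $U$ and $V$ have dimension prime to $p$, after which Corollary \ref{tensor-simple} applies verbatim and delivers the desired vertices $Q$, $R$ with $Q\cap R$ a vertex of $W$ and $QR$ a Sylow $p$-subgroup of $G$.

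First I would invoke the theorem of Puig quoted above (\cite[Theorem (30.5)]{Thev}, \cite[Theorem 10.6.8]{LiBookI}): since $G$ is $p$-solvable, every simple $kG$-module has endopermutation source. Applying this to $U$ and to $V$, each of their sources is an indecomposable endopermutation $kQ$-module whose vertex is all of $Q$; that is, a capped endopermutation module with full vertex. (Here $U$, $V$ are simple $kG$-modules, so this is the case $\CO=k$, and Corollary \ref{tensor-simple} is the relevant specialisation.)

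The remaining point is the standard fact that such a module $S$ has $\dim_k S$ prime to $p$. This is immediate from the defining property of endopermutation modules: $\End_k(S)\cong S\tenk S^*$ is a permutation $kQ$-module, say $k\Omega$, whose orbit decomposition into indecomposable summands $k[Q/R]$ has every dimension $[Q:R]$ divisible by $p$ except for the trivial summands with $R=Q$. For the cap the Brauer quotient $S(Q)$ is one-dimensional, whence $(S\tenk S^*)(Q)\cong\End_k(S(Q))$ is one-dimensional and the trivial module occurs in $S\tenk S^*$ with multiplicity exactly one; since $|\Omega|\equiv|\Omega^Q|\pmod p$ for the $p$-group $Q$, reducing $\dim_k(S\tenk S^*)=(\dim_k S)^2$ modulo $p$ yields $(\dim_k S)^2\equiv 1\pmod p$, so $p\nmid\dim_k S$. (This is well known; see the theory of endopermutation modules in \cite{Thev}.)

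With the sources of $U$ and $V$ now known to have dimension prime to $p$, the hypotheses of Corollary \ref{tensor-simple} are satisfied, and its conclusion is precisely the statement to be proved. I expect no genuine obstacle: $p$-solvability enters only through Puig's theorem on endopermutation sources, and everything else has been absorbed into the earlier results.
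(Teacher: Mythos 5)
Your proposal follows exactly the paper's route: the corollary is deduced from Corollary \ref{tensor-simple} together with Puig's theorem that simple modules over $p$-solvable groups have endopermutation sources, the only point to supply being that an indecomposable endopermutation $kQ$-module with vertex $Q$ has dimension prime to $p$ --- which is precisely the ``in particular'' in the paragraph preceding the corollary in the paper. So the overall reduction is correct and identical to the paper's.

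The one flaw is in your justification of that dimension fact. You apply the Brauer construction to the source $S$ itself, claiming that $S(Q)$ is one-dimensional and that $(S\tenk S^*)(Q)\cong\End_k(S(Q))$. The multiplicativity $(M\tenk N)(Q)\cong M(Q)\tenk N(Q)$ holds for $p$-permutation modules, but an endopermutation module $S$ is generally \emph{not} a $p$-permutation module, so neither assertion is justified as stated (for an arbitrary module, $S(Q)$ carries no useful control). The standard repair works with $\End_k(S)$ directly: since $S$ has vertex $Q$, Higman's criterion shows that the permutation module $\End_k(S)\cong k\Omega$ cannot be a direct sum of modules induced from proper subgroups, so $\Omega$ has at least one $Q$-fixed point; and since $\End_{kQ}(S)$ is local, the trace-form argument (as in Benson--Carlson: the multiplicity of $k$ as a summand of $S\tenk S^*$ equals the rank of the pairing $(f,g)\mapsto \mathrm{tr}(fg)$ on $\End_{kQ}(S)$, which factors through the one-dimensional quotient by the radical) shows this multiplicity is at most one. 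Hence $|\Omega^Q|=1$ and $(\dim_k S)^2=|\Omega|\equiv 1 \pmod p$, which is the congruence you wanted. With that correction --- or simply a citation, since the fact is standard in the theory of endopermutation modules --- your proof is complete and coincides with the paper's.
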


For the proof of Theorem \ref{tensor-indec}, we collect a few
elementary observations on vertices and sources which imply that the 
first two statements in Theorem \ref{tensor-indec} hold under weaker 
hypotheses. We refer to \cite[\S 5.1]{LiBookI} for definitions and
basic properties of Green's theory of vertices and sources of modules
from \cite{Green59}; these - as well as a number of arguments in this 
note - depend on the fact, used without further mention, that the 
Krull-Schmidt theorem holds in the context of the present note. The 
following is well-known.

\begin{Lemma}[see e. g. {\cite[Theorem 5.1.11]{LiBookI}} 
or {\cite[Ch. 3, Theorem 1.17]{NaTs}} ] \label{lemma1}
Let $G$ be a finite group and let $U$, $V$ be indecomposable
$\CO$-free $\OG$-modules, and let $Q$, $R$ be vertices of $U$, $V$,
respectively. Then for every indecomposable direct summand $W$ of
$U\tenO V$ there is $x\in$ $G$ such that $Q\cap {^xR}$ contains a
vertex of $W$.
\end{Lemma}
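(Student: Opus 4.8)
The plan is to reduce everything to the two standard functorial identities governing induction: the tensor identity (projection formula) $\Ind_Q^G(M)\tenO N\cong\Ind_Q^G\big(M\tenO\Res_Q^G(N)\big)$, and Mackey's formula for the restriction of an induced module. First I would use that $U$ is relatively $Q$-projective and $V$ is relatively $R$-projective, so that $U$ is a direct summand of $\Ind_Q^G\Res_Q^G(U)$ and $V$ is a direct summand of $\Ind_R^G\Res_R^G(V)$. Writing $M=\Res_Q^G(U)$ and $N=\Res_R^G(V)$, it follows that $W$, being a summand of $U\tenO V$, is a direct summand of $\Ind_Q^G(M)\tenO\Ind_R^G(N)$.

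The key step is to decompose this last module as a direct sum of modules each of which is induced from a subgroup of the form $Q\cap{}^xR$. Applying the tensor identity once gives $\Ind_Q^G(M)\tenO\Ind_R^G(N)\cong\Ind_Q^G\big(M\tenO\Res_Q^G\Ind_R^G(N)\big)$. I would then expand the inner restriction by Mackey's formula, obtaining $\Res_Q^G\Ind_R^G(N)\cong\bigoplus_x\Ind_{Q\cap{}^xR}^Q\Res_{Q\cap{}^xR}^{{}^xR}({}^xN)$, the sum running over representatives $x$ of the double cosets $Q\backslash G/R$. Distributing the tensor factor $M$ over this direct sum and applying the tensor identity a second time, now for induction from $Q\cap{}^xR$ up to $Q$, turns each summand into a module induced from $Q\cap{}^xR$; its interior is $\Res_{Q\cap{}^xR}^Q(M)\tenO\Res_{Q\cap{}^xR}^{{}^xR}({}^xN)$. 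Transitivity of induction then yields $\Ind_Q^G(M)\tenO\Ind_R^G(N)\cong\bigoplus_x\Ind_{Q\cap{}^xR}^G(-)$, a direct sum of modules each of which is relatively $(Q\cap{}^xR)$-projective.

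Since $W$ is indecomposable and is a summand of the left-hand side, the Krull--Schmidt theorem (available in the present setting, as fixed earlier) shows that $W$ is isomorphic to a summand of one of the terms on the right, say the term indexed by some $x$. Hence $W$ is relatively $(Q\cap{}^xR)$-projective, so a vertex of $W$ is contained in a $G$-conjugate of $Q\cap{}^xR$; conjugating that vertex back (vertices forming a single $G$-conjugacy class) produces a vertex of $W$ literally contained in $Q\cap{}^xR$, which is the asserted conclusion.

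The only genuine obstacle I anticipate is bookkeeping: keeping the conjugation conventions in Mackey's formula consistent so that the interior subgroups really appear as $Q\cap{}^xR$ rather than some variant, and then carrying out the final conjugacy manoeuvre that converts ``contained in a conjugate of $Q\cap{}^xR$'' into ``$Q\cap{}^xR$ contains a vertex of $W$''. Both are routine once the double-coset indexing and the Krull--Schmidt framework are in place.
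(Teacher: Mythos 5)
Your proof is correct: the paper states this lemma without proof, citing \cite[Theorem 5.1.11]{LiBookI} and \cite[Ch.~3, Theorem 1.17]{NaTs}, and your argument --- relative projectivity of $U$ and $V$, the projection formula applied twice around Mackey's decomposition of $\Res^G_Q\Ind^G_R$, transitivity of induction, and Krull--Schmidt to place $W$ in a single $(Q\cap{}^xR)$-projective summand --- is exactly the standard argument given in those references. The final conjugation step, replacing a vertex contained in a conjugate of $Q\cap{}^xR$ by a conjugate vertex contained in $Q\cap{}^xR$ itself, is handled correctly as well.
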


\begin{Lemma} \label{lemma2}
Let $G$ be a finite group and let $U$, $V$ be indecomposable
$\CO$-free $\OG$-modules having sources of $\CO$-ranks prime to $p$.

\begin{enumerate}
\item[{\rm (i)}]
For any $x\in$ $G$ there exists an indecomposable direct summand
$W$ of $U\tenO V$ such that $Q\cap {^xR}$ is contained in a vertex
of $W$.

\item[{\rm (ii)}]
If $x\in G$ is chosen such that $Q\cap {^xR}$ has maximal order amongst
the subgroups of the form $Q\cap {^yR}$, where $y\in G$, then 
$U\tenO V$ has an indecomposable direct summand $W$ with vertex 
$Q\cap {^xR}$ and with sources of $\CO$-rank prime to $p$.

\item[{\rm (iii)}]
Suppose that $U\tenO V$ is indecomposable. Then the sources of 
$U\tenO V$ have $\CO$-rank prime to $p$, and for any $x\in$ $G$, the
subgroup $Q\cap {^xR}$ is contained in a vertex of $U\tenO V$. If 
$Q\cap {^xR}$ has maximal order amongst all subgroups of this form, 
then $Q\cap {^xR}$ is a vertex of $U\tenO V$.
\end{enumerate}
\end{Lemma}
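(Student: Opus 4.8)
The plan is to reduce all three parts to one elementary observation about modules over a $p$-group, namely: if $P$ is a finite $p$-group and $M$ is an $\CO$-free $\CO P$-module with $\rank_\CO(M)$ prime to $p$, then $M$ has an indecomposable direct summand with vertex $P$ and with $\CO$-rank prime to $p$. I would prove this by first showing that any indecomposable $\CO P$-module $M_i$ with vertex $Q$ properly contained in $P$ has $\rank_\CO(M_i)$ divisible by $p$: choosing a subgroup $P'$ of index $p$ in $P$ with $Q\le P'$, relative $Q$-projectivity and transitivity of the relative trace give $\mathrm{id}_{M_i}=\Tr_{P'}^P(\psi)$ for some $\psi\in\End_{\CO P'}(M_i)$, so that $\rank_\CO(M_i)=\tr_\CO(\mathrm{id}_{M_i})=\sum_{t\in[P/P']}\tr_\CO(t\psi t^{-1})=p\cdot\tr_\CO(\psi)$, where I use that the $\CO$-trace is invariant under conjugation by the $\CO$-linear automorphisms given by elements of $P$. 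Decomposing $M$ into indecomposables and removing those of proper vertex (each of $\CO$-rank divisible by $p$) then leaves a direct sum, still of $\CO$-rank prime to $p$, all of whose summands have vertex $P$; at least one of these must itself have $\CO$-rank prime to $p$.

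Write $P_x=Q\cap{}^xR$. I would connect $U\tenO V$ to these subgroups through restriction rather than induction. Fixing sources $S$ of $U$ at $Q$ and $T$ of $V$ at $R$, one has $S\mid\Res_Q^G U$ and, since $V\cong{}^xV$, also ${}^xT\mid\Res_{{}^xR}^G V$; restricting both to $P_x$ and tensoring shows that $M_x:=\Res_{P_x}^Q S\tenO\Res_{P_x}^{{}^xR}({}^xT)$ is a direct summand of $\Res_{P_x}^G(U\tenO V)$, of $\CO$-rank $\rank_\CO(S)\cdot\rank_\CO(T)$, which is prime to $p$. Applying the observation to $M_x$ yields an indecomposable summand $N_x$ of $\Res_{P_x}^G(U\tenO V)$ of vertex $P_x$; hence $N_x\mid\Res_{P_x}^G W$ for some indecomposable direct summand $W$ of $U\tenO V$. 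A Mackey decomposition of $\Res_{P_x}^G\Ind_{Q_W}^G(S_W)$, where $Q_W$ and $S_W$ are a vertex and a source of $W$, shows that a summand of $\Res_{P_x}^G W$ can have vertex $P_x$ only if $P_x\subseteq{}^gQ_W$ for some $g\in G$. Thus $P_x$ is contained in a vertex of $W$, proving (i).

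For (ii) I would run the same argument with $x$ chosen so that $|P_x|$ is maximal, now taking $N_x$ of vertex $P_x$ \emph{and} $\CO$-rank prime to $p$, as the observation allows. The resulting indecomposable summand $W$ of $U\tenO V$ again has a vertex $Q_W$ containing a conjugate of $P_x$; by Lemma~\ref{lemma1}, $Q_W$ is in turn contained in a conjugate of some $P_y$, so $|P_x|\le|Q_W|\le|P_y|\le|P_x|$ and $Q_W$ is conjugate to $P_x$. A further Mackey decomposition of $\Res_{P_x}^G\Ind_{P_x}^G(S_W)$ shows that every vertex-$P_x$ indecomposable summand of $\Res_{P_x}^G W$ is an $N_G(P_x)$-conjugate of $S_W$, hence of the same $\CO$-rank; as $N_x$ is such a summand, the source $S_W$ has $\CO$-rank $\rank_\CO(N_x)$, which is prime to $p$. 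This is exactly the summand required in (ii). Part (iii) is then formal: applying (i) with $W=U\tenO V$ shows every $P_x$ lies in the vertex of $U\tenO V$, and applying (ii) with $x$ of maximal $|P_x|$ identifies that vertex as $P_x$ and shows the source has $\CO$-rank prime to $p$.

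I expect the genuine content to lie entirely in the rank observation of the first paragraph; after that, the two Mackey decompositions — one pushing $N_x$ up to a lower bound on the vertex of $W$, the other pinning down the source of $W$ — are routine, and the passage to maximal $|P_x|$ is a purely order-theoretic use of Lemma~\ref{lemma1}. The one point requiring care is that the observation must supply a summand that is simultaneously of full vertex $P_x$ and of $\CO$-rank prime to $p$: the vertex property is what locates $W$ and forces $P_x$ into its vertex, while the rank property is what controls the $\CO$-rank of the source of $W$ in (ii).
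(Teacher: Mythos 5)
Your proposal is correct and takes essentially the same route as the paper: restrict to $Q\cap{}^xR$, tensor the restricted sources to obtain a direct summand of $\Res^G_{Q\cap{}^xR}(U\tenO V)$ of $\CO$-rank prime to $p$, note that such a summand must have full vertex $Q\cap{}^xR$, lift it via Krull--Schmidt to an indecomposable summand $W$ of $U\tenO V$, and invoke Lemma \ref{lemma1} for the maximality statements in (ii) and (iii). The only differences are at the level of detail: where the paper implicitly uses Green's bound (Lemma \ref{lemma3}) to see that a rank-prime-to-$p$ indecomposable module over a $p$-group has full vertex, you reprove that special case by the index-$p$ relative-trace computation, and you write out explicitly the two Mackey decompositions (locating $Q\cap{}^xR$ inside a vertex of $W$, and identifying your full-vertex summand as a source) that the paper treats as standard facts.
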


\begin{proof}
Let $Q$, $R$ be vertices of $U$, $V$, respectively. Let $x\in$ $G$.
Then $Q\cap {^xR}$ is contained in a vertex of $U$ and in a vertex of
$V$. The assumptions on $U$ and $V$ imply that $\Res^G_{Q\cap {^xR}}(U)$
and $\Res^G_{Q\cap {^xR}}(V)$ have indecomposable direct summands of
$\CO$-ranks prime to $p$. Thus their tensor product 
$\Res^G_{Q\cap {^xR}}(U\tenO V)$ has an idecomposable direct summand $Y$ 
of $\CO$-rank prime to $p$. In particular, $Q\cap {^xR}$ is the vertex
of $Y$. Moreover, since $Y$ is indecompsable, it follows that $Y$ is 
isomorphic to a direct summand of $\Res^G_{Q\cap {^xR}}(W)$ for some 
indecomposable direct summand $W$ of $U\tenO V$, and hence $Q\cap {^xR}$ 
is contained in a vertex of $W$. This shows (i). The same argument,
assuming in addition that $Q\cap {^xR}$ has maximal order amongst the
subgroups of this form, in conjunction with Lemma \ref{lemma1} shows 
(ii). Statement (iii) follows from (i) and (ii).
\end{proof}

For $m$ a positive integer, we denote by $m_p$ the highest power of $p$ 
which divides $m$, and we denote by $\rk_\CO(U)$ the $\CO$-rank of a 
free $\CO$-module $U$. 

\begin{Lemma}[{Green \cite[Theorem 9]{Green59}}] \label{lemma3}
Let $G$ be a finite group, $U$ an $\CO$-free indecomposable
$\OG$-module, and $Q$ a vertex of $U$. Then $\rk_\CO(U)_p\geq$
$|G:Q|_p$.
\end{Lemma}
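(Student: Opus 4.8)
The plan is to capture the vertex condition through Higman's relative trace criterion and then extract the $p$-adic divisibility by taking ordinary $\CO$-linear traces. The argument is clean and uniform in the characteristic-zero case; the equal-characteristic case $\CO=k$ is where it genuinely has to work harder, so I would treat that separately and reduce it to $p$-groups.

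First, suppose that the fraction field $K$ has characteristic zero. Since $Q$ is a vertex of $U$, the module $U$ is relatively $Q$-projective, so by Higman's criterion the identity $\Id_U$ lies in the image of the relative trace map $\Tr^G_Q\colon \End_{\CO Q}(\Res^G_Q(U))\to \End_{\OG}(U)$; write $\Id_U=\Tr^G_Q(\phi)=\sum_{x\in[G/Q]}{}^x\phi$ for a suitable $\phi$. I would then apply the ordinary $\CO$-linear trace $\tr\colon\End_\CO(U)\to\CO$. As $\tr$ is additive and invariant under conjugation by the invertible elements $x\in G$, each summand contributes $\tr({}^x\phi)=\tr(\phi)$, so that
\[
\rk_\CO(U)=\tr(\Id_U)=|G:Q|\cdot\tr(\phi)
\]
as elements of $\CO$, with $\tr(\phi)\in\CO$. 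Passing to the valuation $v$ of $\CO$ gives $v(\rk_\CO(U))\geq v(|G:Q|)$; since for a positive integer $n$ the prime-to-$p$ part is a unit in $\CO$, one has $v(n)=v(p)\,v_p(n)$, and this inequality becomes exactly $\rk_\CO(U)_p\geq|G:Q|_p$. This settles the characteristic-zero case directly, for arbitrary $G$.

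The main obstacle is the equal-characteristic case $\CO=k$, where the displayed identity only lives in $k$ and, when $p$ divides $|G:Q|$, degenerates to $\rk_k(U)\cdot 1_k=0$; this detects a single factor of $p$ but not the full $p$-power $|G:Q|_p$. To handle it I would first reduce to the case where $G=P$ is a $p$-group. Choosing a Sylow $p$-subgroup $P$ with $Q\leq P$ and writing $\Res^G_P(U)=\bigoplus_i U_i$ with $U_i$ indecomposable, the Mackey decomposition of $\Res^G_P\Ind^G_Q(S)$, where $S$ is a source of $U$, shows that each $U_i$ has a vertex $Q_i$ contained in a $G$-conjugate of $Q$; hence $|Q_i|$ divides $|Q|$ and $|P:Q|$ divides $|P:Q_i|$. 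As $\rk_\CO$ is additive on direct sums and $|P:Q|=|G:Q|_p$, the theorem for $G$ would follow from the bound $|P:Q_i|\mid\rk_\CO(U_i)$ for each $i$, i.e. from the theorem for the $p$-group $P$.

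Finally, for $P$ a $p$-group I would argue by induction on $|P|$, the case $Q=P$ being trivial. Picking a maximal subgroup $M\normal P$ with $Q\leq M$ and $|P:M|=p$, the module $U$ is relatively $M$-projective, and Clifford theory for the normal subgroup $M$ splits the argument into the case where $\Res^P_M(U)$ is a sum of $p$ distinct $P$-conjugate indecomposables (so $U\cong\Ind^P_M(V)$ and the inductive bound for $V$ already supplies the required extra factor of $p$) and the case where $\Res^P_M(U)$ is indecomposable. I expect the latter case to be the crux: there the naive combination of the inductive bound $|M:Q|\mid\rk_\CO(U)$ with the single factor $p\mid\rk_\CO(U)$ coming from relative $M$-projectivity is not enough, since $|M:Q|$ is itself a power of $p$. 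Recovering the full power $|P:Q|$ then forces Green's more careful count, via the Mackey formula for $\Res^G_Q\Ind^G_Q(S)$ modulo the contributions induced from the proper subgroups $Q\cap{}^xQ$. This delicate count is the part I would expect to demand the most work.
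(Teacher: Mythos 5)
Your characteristic-zero argument is correct and standard (from $\Id_U=\Tr^G_Q(\phi)$ one gets $\rk_\CO(U)=|G:Q|\cdot\tr(\phi)$ in $\CO$ and compares valuations), and your Mackey reduction to a Sylow $p$-subgroup $P$ is sound: every indecomposable summand $U_i$ of $\Res^G_P(U)$ has a vertex $Q_i$ with $|Q_i|\leq |Q|$, so $|P:Q|$ divides $|P:Q_i|$, and additivity of rank reduces everything to the $p$-group case. Note that the paper itself offers no proof to compare against: Lemma \ref{lemma3} is quoted from Green \cite[Theorem 9]{Green59}, with proofs referenced to \cite[Theorem 5.12.13]{LiBookI} and \cite[Ch. 7, Theorem 7.5]{NaTs}. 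Measured against the statement itself, however, your proposal has a genuine gap exactly where the content of the theorem lives when $\CO=k$: in the $p$-group induction you explicitly leave open the subcase where $\Res^P_M(U)$ is indecomposable, and you are right that it cannot be closed naively --- the inductive bound $|M:Q|\mid \rk_k(U)$ together with the single factor $p\mid\rk_k(U)$ from relative $M$-projectivity does not yield $p\,|M:Q|\mid\rk_k(U)$. Deferring this to ``Green's more careful count'' is a gesture, not an argument; as submitted, the proof is complete only in characteristic zero.

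The missing idea is that the subcase you were dreading is vacuous, by Green's indecomposability theorem. Since $Q\leq M$, the module $U$ is relatively $M$-projective, hence a direct summand of $\Ind^P_M(V)$ for some indecomposable summand $V$ of $\Res^P_M(U)$; if $V$ is absolutely indecomposable (guaranteed when $k$ is large enough, e.g. algebraically closed --- in the spirit of the paper's standing hypothesis), then $\Ind^P_M(V)$ is itself indecomposable, so $U\cong\Ind^P_M(V)$ and $\rk_k(U)=p\cdot\rk_k(V)$. In particular $\Res^P_M(U)$ is never indecomposable when a vertex of $U$ lies in $M$; note also that your dichotomy is misstated in a minor way, since the restriction may consist of $p$ isomorphic copies of one $P$-stable module (as for the regular $kC_p$-module restricted to the trivial subgroup) rather than $p$ distinct conjugates --- but in all cases $U$ is induced. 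A two-sided Mackey comparison then pins down the vertex of $V$: from $V\mid\Res^P_M\Ind^P_Q(S)$ and $M\cap{}^xQ={}^xQ$ (as $Q\leq M\normal P$) one gets $|Q_V|\leq|Q|$, while $U\mid\Ind^P_M(V)$ gives $|Q|\leq|Q_V|$; hence $Q_V$ is $P$-conjugate to $Q$. Induction on $|P|$ now yields $\rk_k(U)_p=p\cdot\rk_k(V)_p\geq p\,|M:Q|=|P:Q|$, completing your outline without the delicate count you anticipated, and giving a proof genuinely more self-contained than the paper's citation.
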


See e. g. \cite[Theorem 5.12.13]{LiBookI} or 
\cite[Ch. 7, Theorem 7.5]{NaTs} for proofs of this Lemma. 
The key ingredient for the proof of Theorem \ref{tensor-indec} is the
following result due to Kn\"orr, which is a criterion for when the 
inequality in Lemma \ref{lemma3} is an equality. 

\begin{Theorem}[{\cite[Theorem 4.5]{Knorr}}] \label{Knoerr45}
Let $G$ be a finite group, $U$ an indecomposable $\CO$-free 
$\OG$-module, and $Q$ a vertex of $U$. Suppose that the sources
of $U$ have $\CO$-rank prime to $p$ and that $U$ has a simple
multiplicity module. Then $\rk_\CO(U)_p=$ $|G:Q|_p$.
\end{Theorem}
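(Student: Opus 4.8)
The plan is to combine the lower bound of Lemma \ref{lemma3} with a structural analysis of the multiplicity module of $U$, in which simplicity becomes the decisive input. Write $S$ for a source of $U$, an $\CO$-free indecomposable $\OQ$-module with $\rk_\CO(S)$ prime to $p$, and set $L=\Ind_Q^G(S)$, so that $U$ is a direct summand of $L$ and $\rk_\CO(L)=|G:Q|\,\rk_\CO(S)$ has $p$-part exactly $|G:Q|_p$. First I would recall the construction of the multiplicity module: the Brauer quotient $\End_\CO(L)(Q)$ is isomorphic to a twisted group algebra $k_\gamma\bar N$, where $\bar N=N_G(Q,S)/Q$ is the image in $N_G(Q)/Q$ of the stabiliser of the isomorphism class of $S$ and $\gamma$ is a suitable $2$-cocycle, and the indecomposable summands of $L$ with vertex $Q$ and source $S$ correspond, via the Brauer homomorphism $\Br_Q$ applied to the relevant primitive idempotents, to modules over $k_\gamma\bar N$. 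The module $\bM(U)$ attached to $U$ in this way is its multiplicity module (see \cite[\S 5.7]{LiBookI}).

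The heart of the argument is the identification of two structural features of $\bM(U)$. The first uses the hypothesis on the source: because $\rk_\CO(S)$ is prime to $p$, the Brauer quotient $\End_\CO(S)(Q)$ is one-dimensional over $k$, and tracing this through the correspondence yields a rank formula of the shape
\[
\frac{\rk_\CO(U)_p}{|G:Q|_p}=\frac{(\dim_k\bM(U))_p}{|\bar N|_p}.
\]
The second feature is that $\bM(U)$ is a \emph{projective} $k_\gamma\bar N$-module; this reflects the fact that $Q$ is a genuine vertex of $U$ rather than a proper overgroup of one, so that no relative projectivity with respect to a proper subgroup of $Q$ is available at the level of the multiplicity module. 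Since the Schur multiplier of a $p$-group is a $p$-group, the cocycle $\gamma$ restricts trivially to a Sylow $p$-subgroup of $\bar N$, whence projective $k_\gamma\bar N$-modules have dimension divisible by $|\bar N|_p$. These two facts already recover the inequality $\rk_\CO(U)_p\geq|G:Q|_p$ of Lemma \ref{lemma3}, now with equality controlled precisely by whether $(\dim_k\bM(U))_p=|\bar N|_p$.

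It then remains to invoke simplicity. A simple projective module over $k_\gamma\bar N$ is exactly a simple module lying in a block of defect zero, and for such a module the dimension has $p$-part equal to $|\bar N|_p$. Feeding this into the rank formula gives $\rk_\CO(U)_p=|G:Q|_p$, as required. The main obstacle is the verification of the two structural inputs of the second paragraph, in particular the precise rank formula and the projectivity of $\bM(U)$; I expect to establish these by reducing via Green correspondence to the normaliser $N=N_G(Q)$, where $Q$ is normal and Clifford theory applies, so that $L$ becomes, up to summands of strictly smaller vertex whose $\CO$-ranks have $p$-part at least $p\,|G:Q|_p$ and are therefore harmless, the tensor product of an extension of $S$ with an inflated projective $k_\gamma\bar N$-module. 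The delicate points there are the bookkeeping of the indices relating $|G:Q|_p$, $|N:Q|_p$ and $|\bar N|_p$, and the control of the cocycle $\gamma$, both of which are standard but need care.
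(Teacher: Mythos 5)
Your overall architecture is the right one, and in fact it is the architecture of the proof the paper points to: the paper does not prove this theorem itself but quotes Kn\"orr, and Remark \ref{rem3} together with the citations of \cite[Theorems 5.7.7, 5.12.15]{LiBookI} indicates exactly your scheme --- a rank relation tying $\rk_\CO(U)_p/|G:Q|_p$ to $(\dim_k \bM(U))_p/|\bar N|_p$, triviality of the cocycle $\gamma$ on a Sylow $p$-subgroup of $\bar N$ (because $k^\times$ has no $p$-torsion and is $p$-divisible), and the observation that a simple projective module of a central $p'$-extension of $\bar N$ lies in a block of defect zero, so its dimension has $p$-part exactly $|\bar N|_p$ (Kn\"orr's \cite[Proposition 4.2]{Knorr}). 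Your bookkeeping for the summands of smaller vertex (their $\CO$-ranks have $p$-part at least $p\,|G:Q|_p$, by Lemma \ref{lemma3}) is also correct.

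There is, however, a genuine gap: the one concrete justification you offer for the pivotal rank formula is false. It is not true that $\rk_\CO(S)$ prime to $p$ forces $\End_\CO(S)(Q)$ to be one-dimensional. Take $Q=C_p$ with $p\geq 5$ and $S$ the $2$-dimensional indecomposable $kQ$-module; then $S$ has vertex $Q$ and dimension prime to $p$, but $\End_k(S)\cong S\otimes S^*$ decomposes as the direct sum of the indecomposables of dimensions $1$ and $3$, neither of which is projective, so all relative traces $\tr_1^Q$ vanish on $\End_k(S)^Q=\End_{kQ}(S)$ and $\End_k(S)(Q)=\End_{kQ}(S)$ is $2$-dimensional. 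For the same reason $\End_\CO(L)(Q)$ is not $k_\gamma\bar N$: the correct statement is that the quotient of $\End_{\CO N}(\Ind_Q^N(S))$, with $N=N_G(Q,S)$, by the endomorphisms factoring through relatively $R$-projective modules for proper subgroups $R$ of $Q$ is a crossed product of $\bar N$ over the local ring $\Endbar_{\CO Q}(S)$, and $k_\gamma\bar N$ is only the quotient of this by the ideal generated by the radical of $\Endbar_{\CO Q}(S)$. What the hypothesis $p\nmid\rk_\CO(S)$ actually buys in Kn\"orr's and Linckelmann's treatments is a congruence in the $p$-part bookkeeping, not one-dimensionality of a Brauer quotient. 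Consequently your two ``structural inputs'' --- the precise rank formula and the projectivity of $\bM(U)$ over $k_\gamma\bar N$ --- remain entirely open in the sketch; they are precisely the content of \cite[Theorem 5.12.15]{LiBookI} and Kn\"orr's \S 4, i.e.\ the hard part of the theorem has been deferred rather than proved. Your heuristic for projectivity (``no relative projectivity with respect to a proper subgroup of $Q$ is available'') is not an argument, although the conclusion is correct: it follows from Green correspondence, which reduces to a direct summand of $\Ind_Q^{N_G(Q)}(S)$, followed by Clifford theory down to $N_G(Q,S)$ --- note the reduction must pass through the stabiliser $N_G(Q,S)$, not just $N_G(Q)$, and $S$ need not extend to $N_G(Q,S)$, which is exactly why $\gamma$ is there --- and idempotent lifting in the crossed product described above. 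So: right plan, matching the cited proof, but the decisive step is supported by a false lemma and otherwise left unverified.
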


The statement in \cite[Theorem 4.5]{Knorr} does not mention 
multiplicity modules explicitly, but the hypotheses in the theorem 
ensure the simplicity of multiplicity modules, and this is all that is 
used in the proof of \cite[Theorem 4.5]{Knorr}. See 
\cite[\S 9]{ThevdualityG} for more general background material on 
multiplicity modules and characterisations of simple multiplicity 
modules. A description of some of this material closer to the 
terminology used above is given in 
\cite[Theorems 5.7.7, 5.12.15]{LiBookI}.

\begin{proof}[{Proof of Theorem \ref{tensor-indec}}]
Let $Q$, $R$ be vertices of $U$, $V$, respectively, chosen such that
$Q\cap R$ has maximal order amongst all intersections of a vertex of $U$ 
and a vertex of $V$. It follows from Lemma \ref{lemma2} (iii) that 
$Q\cap R$ is a vertex of $W=$ $U\tenO V$ and that the sources of $W$ 
have $\CO$-rank prime to $p$.

Since $W$ has a simple multiplicity module, it follows from Theorem 
\ref{Knoerr45} and Lemma \ref{lemma3} that we have
$$|G: Q\cap R|_p=\rk_\CO(W)_p=\rk_\CO(U)_p\cdot\rk_\CO(V)_p \geq
|G:Q|_p\cdot |G:R|_p\ .$$ 
The left side is also equal to $|G:Q|_p\cdot |Q:Q\cap R|$, hence
cancelling $|G:Q|_p$ yields 
$$|Q:Q\cap R| \geq |G:R|_p = |G|_p/|R|\ .$$
Now $|Q:Q\cap R|=$ $|QR|/|R|$, so together this yields $|QR|\geq$ 
$|G|_p$.

Let $P$ be a Sylow $p$-subgroup of $G$ 
containing $Q$. Let $x\in$ $G$ such that ${^x{R}}\subseteq$ $P$. 
It follows from Lemma \ref{lemma2} (ii) that $Q\cap {^x{R}}$ is contained
in a vertex of $W$. In particular, we have $|Q\cap {^x{R}}|\leq$
$|Q\cap R|$. Thus $|Q({^xR})|\geq |QR| \geq$ $|P|$. 
But since both $Q$, ${^xR}$ are contained in $P$, we also have
$|Q({^xR})|\leq |P|$. Thus all inequalities in this proof are 
equalities. This forces $Q({^xR})=P$ and $|Q\cap {^xR}|=$
$|Q\cap R|$, so $Q$ and ${^xR}$ (instead of $R$) satisfy all
conclusions. 
\end{proof}

\begin{Remark} \label{rem1}
The proof of Theorem \ref{tensor-indec} shows a bit more: with the
notation and hypotheses of Theorem \ref{tensor-indec}, any intersection
of a vertex of $U$ and a vertex of $V$ is contained in a vertex of $W$,
and any intersection of maximal order of a vertex of $U$ and a vertex of 
$V$ is a vertex of $W$. Moreover, for any choice of vertices  $Q$, $R$ 
of $U$, $V$, respectively, such that both $Q$, $R$ are contained in a 
Sylow $p$-subgroup $P$ of $G$, we have $P=$ $QR$, and $Q\cap R$ is a 
vertex of $W$. This points to some information about 
fusion in $G$: if $x\in$ $G$ is chosen such that $Q\cap {^xR}$ has 
maximal order amongst all subgroups of this form, then $Q\cap {^xR}$ is
a vertex of $W$, and by Lemma \ref{lemma2} (iii), for any $y\in$ $G$, the
group $Q\cap {^yR}$ is $G$-conjugate to a subgroup of $Q\cap {^xR}$. In
particular, if $x$, $y$ are two elements in $G$ such that $Q\cap {^xR}$ 
and $Q\cap {^yR}$ both have the same maximal order amongst all groups
of this form, then $Q\cap {^xR}$ and $Q\cap {^yR}$ are $G$-conjugate,
since they both are vertices of $W$. 
\end{Remark}

\begin{Remark} \label{rem2}
With the notation and hypotheses of Theorem \ref{tensor-indec}, denote
by $(Q,X)$, $(R,Y)$, $(S,Z)$ vertex-source pairs of $U$, $V$, $W$,
respectively. The fact that the displayed inequalities in the proof of 
Theorem \ref{tensor-indec} are all equalities implies that we have 
$$|G:Q|_p= \rk_\CO(U)_p\ ,$$  
$$|G:R|_p= \rk_\CO(V)_p\ ,$$ 
$$|G:S|_p = \rk_\CO(W)_p\ .$$
Moreover, by \cite[Theorem 5.12.15]{LiBookI} the associated multiplicity 
modules $M_U$, $M_V$, $M_W$ of $U$, $V$, $W$, respectively, satisfy
$$\dim_k(M_U)_p = | N_G(Q,X)/Q |_p\ ,$$
$$\dim_k(M_V)_p = | N_G(R,Y)/R |_p\ ,$$
$$\dim_k(M_W)_p = | N_G(S,Z)/S |_p\ .$$
\end{Remark}

\begin{Remark} \label{rem3}
Theorem \ref{tensor-indec} holds with slightly weaker hypotheses on the 
multiplicity modules of $W$. Instead of requiring the simplicity of a 
multiplicity module of $W$, it suffices to require the equality
$$\dim_k(M_W)_p = | N_G(S,Z)/S |_p $$
for the multiplicity module $M_W$ of $W$ associated with a vertexs-souce
pair $(S,Z)$ of $W$ (this is the last equality in Remark 
\ref{rem2}). By a result of Kn\"orr \cite[Proposition 4.2]{Knorr}, this 
equality holds if $M_W$ is simple (essentially because $M_W$ is then a 
projective simple module of a finite central $p'$-extension of 
$N_G(S,Z)/S$), but the simplicity of $M_W$ is not necessary in general 
for this equality. In particular, even if $M_W$ is simple, we do not 
know whether this implies that $M_U$, $M_V$ are necessarily simple as 
well. 
\end{Remark}

\begin{Remark} \label{rem4}
With the notation and hypotheses of Theorem \ref{tensor-indec}, if $D$, 
$E$ are defect groups of the blocks to which $U$, $V$ belong, 
respectively, such that $D$, $E$ are both contained in a fixed Sylow 
$p$-subgroup $P$ of $G$, then $D$ and $E$ contain vertices of $U$ and 
$V$, respectively, and hence $P=$ $DE$ by Remark \ref{rem1}. It would 
be interesting to investigate whether there are more precise 
relationships between the defect groups of the three blocks to which 
$U$, $V$, $W$ belong. For the remainder of this Remark, suppose that 
all three modules $U$, $V$, $W$ have simple multiplicity modules (this 
is for instance the case in any of the Corollaries
\ref{tensor-simple}, \ref{tensor-irred}, \ref{tensor-psolvable}). Then,
as a consequence of the results of Kn\"orr in \cite{Knorr}, the vertices 
$Q$, $R$, $Q\cap R$ of $U$, $V$, $W$ in  Theorem \ref{tensor-indec} are 
centric in the fusion systems of the blocks to which these modules 
belong (see \cite[Theorem 10.3.1]{LiBookII} for a formulation of 
\cite[Theorem 3.3]{Knorr} in terms of fusion systems of blocks). Thus, 
for instance, if the block of $\OG$ to which $U$ (resp. $V$, $W$) 
belongs has abelian defect groups, then $Q$ (resp. $R$, $Q\cap R$) is a 
defect groups of this block. Furthermore, by a slight generalisation 
\cite[Theorem 10.3.6]{LiBookII} of a result of Erdmann 
\cite{Erdmanncyclic}, if $Q$ (resp. $R$, $Q\cap R$) in Theorem 
\ref{tensor-indec} is cyclic, then it is a defect group of the block 
to which $U$ (resp. $V$, $W$) belongs. 
\end{Remark} 

\begin{Remark} \label{GiannelliRemark}
E. Giannelli pointed out that the situation is well understood for 
simple tensor products of simple modules over symmetric groups. Let $n$ 
be a positive integer. Let $U$, $V$ be simple 
$kS_n$-modules of dimensions greater than $1$ such that $W=$ $U\tenk V$ 
is simple. Then by \cite[Main Theorem]{BesKle} we have $p=2$ and $n$ is 
even. The exact tensor products which can arise in this way are 
described in \cite{Mor}, proving a conjecture of Gow and Kleshchev. It 
follows from that description, that then $n=2m$ for some odd integer $m$ 
and that $U$ can be chosen to be the basic spin module (labelled by the 
partition $(m+1,m-1)$). By \cite[Theorem 1.1]{DaKue}, the vertices of 
$U$ are the Sylow $2$-subgroups of $S_n$. It is, however, not always 
possible to choose vertices $Q$, $R$ of $U$, $V$, respectively, such 
that $QR$ is a Sylow $2$-subgroup {\it and} such that $Q\cap R$ is a 
vertex of $W$. Note that $U$ has sources of even dimension (cf. 
\cite[Lemma 5.3]{Ben88} and \cite[Theorem 6.1]{DaKue}), so the 
hypotheses of Theorem \ref{tensor-indec} are not satisfied. 
The following example is due to E. Giannelli. Consider the case $n=6$, 
with simple $kS_6$-modules $U$, $V$, $W$ labelled by the partitions 
$(4,2)$, $(5,1)$, $(3,2,1)$, respectively. It follows from 
\cite[Theorem 1.1]{Mor} that we have $W\cong$ $U\tenk V$. As mentioned
above,  \cite[Theorem 1.1]{DaKue} implies that $U$ has a Sylow 
$2$-subgroup $Q$ of $S_6$ as a vertex. Moreover, 
\cite[Theorem 1.2]{DaGia} implies that also $V$ has a  
Sylow $2$-subgroups $R$ of $S_6$ as a vertex. The product $QR$ is 
therefore a Sylow $2$-subgroup of $S_6$ if and only if $Q=R$. In that 
case we have $Q\cap R=Q=R$, but $W$ has a trivial vertex. 
\end{Remark}


\end{document}